\newtheorem{theorem}{Theorem}[section]
\newtheorem{lemma}[theorem]{Lemma}
\newtheorem{proposition}[theorem]{Proposition}
 \theoremstyle{definition}
\theoremstyle{remark}
\newtheorem{remark}[theorem]{Remark}
\numberwithin{equation}{section}
\newcommand{\bM}{\bar{M}}
\newcommand{\ul}{\underline}
\begin{document}
\setlength{\baselineskip}{1.2\baselineskip}

\title[Obstacle problem on Riemannian manifolds]
{{\it A priori} estimates for the obstacle problem of Hessian type equations on Riemannian manifolds}

\author{Tingting Wang}
\author{WeiSong Dong}
\author{Gejun Bao}

\address{Department of Mathematics, Harbin Institute of Technology, Harbin 150001, China}

\email{ttwanghit@gmail.com}
\email{dweeson@gmail.com}
\email{baogj@hit.edu.cn}

\begin{abstract}
We are concerned with {\it a priori} estimates for the obstacle problem of a wide class of fully nonlinear equations on Riemannian manifolds.
We use new techniques introduced by Bo Guan and derive new results for {\it a priori} second order estimates of its singular perturbation problem under fairly general conditions.
By approximation, the existence of a $C^{1,1}$ viscosity solution is proved.

{\em Mathematical Subject Classification (2010):}  35J60; 58J05; 35B45; 35D40

{\em Keywords:} Obstacle problem; {\it A priori} estimates; Hessian type fully nonlinear elliptic equations; Riemannian manifolds

\end{abstract}

\maketitle

\bigskip

\section{Introduction}

\medskip

This is one of a series of papers in which we study the obstacle problem for Hessian type equations on Riemannian manifolds.
Let $(M^n, g)$ be a compact Riemannian manifold of
dimension $n \geq 2$ with smooth boundary $\partial M$, $\bM := M \cup \partial M$, and $\nabla$ denote its Levi-Civita connection.
In this paper we study the obstacle problem
\begin{equation}
\label{11}
\max \left\{u -h, - (f (\lambda (\nabla^2 u + A [u])) - \psi [u]) \right\} = 0 \; \mbox{ in } M
\end{equation}
with the Dirichlet boundary condition
\begin{equation}
\label{111}
u = \varphi \; \mbox{ on } \partial M,
\end{equation}
where $h \in C^3 (\bM)$ is called an obstacle, $\varphi \in C^4 (\partial M)$,
$h > \varphi$ on $\partial M$, $\psi [u] = \psi (x,  u, \nabla u)$ is a positive function of $C^3$ and $A [u] = A (x, u, \nabla u)$ is a smooth $(0,2)$ tensor which may depend on $u$ and $\nabla u$, $f$ is a symmetric function of $\lambda \in \mathbb{R}^n$,
and for a (0, 2) tensor $X$ on $M$,
$\lambda (X)$ denotes the eigenvalues of $X$ with respect to the metric $g$.

Following \cite{CNS}, the function
$f \in C^2 (\Gamma) \cap C^0 (\overline{\Gamma})$
is assumed to be defined in an open, convex, symmetric
cone $\Gamma \subsetneqq \mathbb{R}^{n}$, with vertex at the origin, containing the positive cone:
$\{\lambda\in\mathbb{R}^{n}: \mbox{each component } \lambda_{i}>0\}$,
and to satisfy the fundamental structure conditions
\begin{equation}
\label{3I-20}
f_{i} \equiv \frac{\partial f}{\partial \lambda_{i}} > 0
\mbox{ in }\Gamma,\  1 \leq i \leq n,
\end{equation}
\begin{equation}
\label{3I-30}
\mbox{$f$ is a concave function in $\Gamma$},
\end{equation}
and
\begin{equation}
\label{3I-40}
f > 0 \mbox{ in } \Gamma, ~~ f = 0 \mbox{ on } \partial \Gamma.
\end{equation}

A function $u \in C^2 (M)$ is called {\it admissible} at $x \in M$
if $\lambda (\nabla^2 u + A [u]) (x) \in \overline{\Gamma}$
and we call it {\it admissible} in $M$ if it is admissible at
each $x \in M$.
It is shown in \cite{CNS} that
\eqref{3I-20} implies that \eqref{11} is elliptic for admissible
solutions, and \eqref{3I-30} ensures
that $F$ defined by $F (r) = f (\lambda (r))$
for $r = \{r_{ij}\} \in \mathcal{S}^{n \times n}$ with $\lambda (r) \in \Gamma$
is concave, where $\mathcal{S}^{n \times n}$ is the set of $n \times n$ symmetric matrices.

In this paper, we prove the existence of a viscosity solution of \eqref{11} and \eqref{111} in $C^{1, 1} (\bM)$ (see \cite{CIL,T} for the definition of viscosity solution).
Our motivation to study equation \eqref{11}
comes partly from its geometric applications.
In \cite{GC0} Gerhardt considered hypersurfaces having prescribed mean curvature $H$ that are bounded from below by an obstacle. The case $H=0$ (minimal surfaces) had been studied by for example Kinderlehrer \cite{Kinderlehrer71,Kinderlehrer73} and Giusti \cite{Giusti}.
Xiong and Bao \cite{XB} studied the problem of finding the greatest hypersurface below a given obstacle, whose
Gauss-Kronecker curvature (accordingly, $f = \sigma^{1/n}_n$) is bounded from below by a positive function, and established  $C^{1,1}$ regularity in nonconvex domains in $\mathbb{R}^n$.
Lee \cite{L} considered obstacle problem for Monge-Amp\`{e}re equation of the case when $A \equiv 0$,
$\psi \equiv 1$, $\varphi \equiv 0$, and proved the $C^{1,1}$ regularity of the viscosity solution and $C^{1,\alpha}$ regularity of free
boundary in a strictly convex domain in $\mathbb{R}^n$.
The interest to \eqref{11} is also arising from its connection to optimal transportation problem, see e.g. Savin \cite{Savin04,Savin05}, Caffarelli and McCann \cite{CM}. Moreover, Liu and Zhou \cite{LZ} treated
an obstacle problem for Monge-Amp\`{e}re equation related to the affine maximal surface
equation and Abreu's equation.
Oberman \cite{O,OS} showed that the convex envelope is a viscosity solution of a partial differential equation in the form of a nonlinear obstacle problem.

The obstacle problem for Hessian equations
on Riemannian manifolds has been studied by Jiao and Wang \cite{JW}, where they considered the case when $A \equiv \kappa u g$ under conditions on $f$ which however exclude the case that $f = (\sigma_k / \sigma_l)^{1/(k - l)}$, $1 \leq l < k \leq n$. Bao, Dong and Jiao \cite{BDJ} considered \eqref{11} and \eqref{111} under a condition (see the condition (2.4) in \cite{BDJ}, see also \cite{GOLD}) which was essential for {\it a priori} second order estimates.
Recently, Jiao \cite{J15} studied an obstacle problem for Hessian equations on Riemannian manifolds using the ideas from the theory of 
the \emph{a priori} estimates for fully nonlinear elliptic equations introduced by Guan \cite{GNEW} (see \cite{GJNEW} for a general form).
Compared with these, we study the obstacle problem of the general case \eqref{11} and \eqref{111}, and derive {\it a priori} estimates without such a condition, using the new technique introduced by Guan \cite{GNEW}, see also \cite{GSSNEW,GJNEW}. Moreover, our problem \eqref{11} covers the case that $f = (\sigma_k / \sigma_l)^{1/(k - l)}$, $1 \leq l < k \leq n$.

\medskip

\textbf{Acknowledgments:}  The authors would like to thank Heming Jiao for drawing the authors' attention to the work about the obstacle problem on Riemannian manifolds and many useful suggestions and comments. We also thank him for sending us his preprint \cite{J15}.

\section{Beginning of Proof}

We use ideas from \cite{GNEW}, see also \cite{GSSNEW,GJNEW}. Suppose, in addition to \eqref{3I-20}-\eqref{3I-40}, that there exists
an admissible subsolution $\ul{u} \in C^2 (\bM)$ satisfying
\begin{equation}
\label{3I-11s}
\left\{
\begin{aligned}
f (\lambda (\nabla^2 \ul{u} + A [\ul u]))  \,& \geq \psi [\ul u] \; \mbox{ in } M, \\
                                     \ul u \,&  = \varphi \; \mbox{ on } \partial M,
\end{aligned}
\right.
\end{equation}
and $\ul u \leq h$ in $M$. We remark here that the existence of $\ul u$ in some special cases can be found in \cite{JW}.

To prove the existence of viscosity solutions to \eqref{11} and \eqref{111},
we use a penalization technique and consider the following singular perturbation problem
\begin{equation}
\label{jw-21}
\left\{
\begin{aligned}
f (\lambda ( \nabla^2 u  + A [u] )) \, & = \psi [u] + \beta_\varepsilon (u-h) \;\; \mbox{ in }  M,\\
                                   u \,& = \varphi \;\; \mbox{ on }  \partial M,
\end{aligned}
\right.
\end{equation}
where the penalty function $\beta_\varepsilon$ is defined by
\begin{equation}
\label{eqn-penal}
\beta_\varepsilon (z) =
\left\{
\begin{aligned}
& 0, & z \leq 0,\\
      & z^3 / \varepsilon, & z > 0,
\end{aligned}
\right.
\end{equation}
for $\varepsilon \in (0, 1)$. Obviously, see \cite{XB}, $\beta_\varepsilon \in C^2 (\mathbb{R})$ satisfies
\begin{equation}
\label{eqn-penalp}
\begin{aligned}
& \beta_\varepsilon, \beta'_\varepsilon, \beta''_\varepsilon \geq 0;\\
& \beta_\varepsilon (z) \rightarrow \infty \; \mbox{ as } \varepsilon \rightarrow 0^+,
     \mbox{ whenever } z > 0;\\
& \beta_\varepsilon (z) = 0, \; \mbox{ whenever } z \leq 0.
\end{aligned}
\end{equation}
Observe that $\ul u$ is also a subsolution to \eqref{jw-21}.

Let
\[
\mathscr{U}
= \left\{ u_{\varepsilon} | ~u_{\varepsilon} \in C^4(\bar M) \; \mbox{is an admissible solution of} \; \eqref{jw-21} \mbox{ with } u_{\varepsilon} \geq \ul u \mbox{ on } \bM \right\}.
\]
We aim to derive the uniform bound
\begin{equation}
\label{maines}
|u_\varepsilon|_{C^2 (\bM)} \leq C
\end{equation}
for $u_\varepsilon \in \mathscr{U}$, where $C$ is independent of $\varepsilon$. Once \eqref{maines} is obtained, we conclude that there exists a function $C^{1, 1} (\bM)$ satisfying \eqref{11} and \eqref{111}, see \cite{BDJ,XB}.

\begin{remark}
For simplicity, we may drop the subscript $\varepsilon$ in the following when
there is no possible confusion.
\end{remark}

In the proof of the second order estimates, we adapt new methods introduced by Guan \cite{GNEW}. We use notations in \cite{GNEW}. Write $\mu(x)=\lambda(\nabla^2 \ul u (x) + A[\ul u](x))$ and note that $\{\mu (x): x \in \bar M\}$
is a compact subset of $\Gamma$. For all $\lambda \in \Gamma$, let $\nu_{\lambda} = D f(\lambda) / |D f (\lambda)|$ denote the unit normal vector to the level hypersurface of $f$ through $\lambda$. There exists a uniform constant $\zeta_0 \in (0, \frac{1}{2 \sqrt{n}})$
such that
\begin{equation}
\label{zeta0}
\nu_{\mu (x)} - 2 \zeta_0 \mathbf{1} \in \Gamma_n, \; \forall x \in \bar M
\end{equation}
where $\mathbf{1} = (1, \cdots, 1) \in \mathbb{R}^n$.

We need the following lemma which is crucial in deriving {\it a priori} $C^2$ estimates.
\begin{lemma}[\cite{GNEW,GJNEW}]
\label{gnew-lem}
Let $K$ be a compact subset of $\Gamma$ and $\zeta>0$. There is a constant $\theta>0$ such that for any $\mu\in K$ and $\lambda\in\Gamma$, when
$|\nu_{\mu} - \nu_{\lambda}| \geq \zeta$,
\begin{equation}
\label{gnew}
\sum f_i (\lambda) (\mu_i - \lambda_i  ) \geq f (\mu) - f (\lambda) + \theta(1 + \sum f_i (\lambda)).
\end{equation}
\end{lemma}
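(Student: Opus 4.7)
The starting point is concavity: for $f$ concave on $\Gamma$ one always has $\sum_i f_i(\lambda)(\mu_i - \lambda_i) \geq f(\mu) - f(\lambda)$, so the lemma reduces to producing an additional positive gap $\theta(1 + \sum_i f_i(\lambda))$ under the normal-separation hypothesis. I would argue by contradiction and compactness.

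Suppose no such $\theta$ exists. Then there are sequences $\mu_k \in K$ and $\lambda_k \in \Gamma$ with $|\nu_{\lambda_k} - \nu_{\mu_k}| \geq \zeta$ and
\[
\frac{\sum_i f_i(\lambda_k)(\mu_{k,i} - \lambda_{k,i}) - f(\mu_k) + f(\lambda_k)}{1 + \sum_i f_i(\lambda_k)} \longrightarrow 0.
\]
Since $K$ is compact and $\mu \mapsto \nu_\mu$ is continuous, a subsequence satisfies $\mu_k \to \mu_\infty \in K$, $\nu_{\mu_k} \to \nu_{\mu_\infty}$, and $\nu_{\lambda_k} \to \nu_\infty$ on the unit sphere with $|\nu_\infty - \nu_{\mu_\infty}| \geq \zeta$. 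I would then split into two cases according to the behavior of the sequence $\lambda_k$.

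If $\lambda_k$ stays in a compact subset of $\Gamma$, pass to a limit $\lambda_k \to \lambda_\infty \in \Gamma$. Since $f$ and $Df$ are continuous and the denominator remains bounded, the concavity inequality becomes equality in the limit, so $f$ is affine on the segment $[\lambda_\infty, \mu_\infty]$. Using the $C^2$ regularity of $f$, the Hessian restricted to the $2$-plane spanned by $v := \mu_\infty - \lambda_\infty$ and an arbitrary vector $\eta$ has vanishing $(v,v)$ entry; negative semidefiniteness then forces the $(v,\eta)$ entry to vanish as well, so $Df \cdot \eta$ is constant along the segment. Since $\eta$ is arbitrary, $Df(\lambda_\infty) = Df(\mu_\infty)$, hence $\nu_{\lambda_\infty} = \nu_{\mu_\infty}$, contradicting $|\nu_\infty - \nu_{\mu_\infty}| \geq \zeta$.

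The remaining case is when $\lambda_k$ escapes every compact subset of $\Gamma$, i.e.\ $|\lambda_k| \to \infty$ or $\lambda_k \to \partial \Gamma$; here $\sum_i f_i(\lambda_k)$ or $f(\lambda_k)$ may blow up. I would renormalize by $|Df(\lambda_k)|$, which is comparable to $\sum_i f_i(\lambda_k)$ up to a factor depending only on $n$, since $\Gamma$ contains the positive cone and hence $\nu_\lambda$ has only positive components, and then analyze the limiting supporting-hyperplane geometry using \eqref{3I-20}--\eqref{3I-40}. The structural conditions, in particular $f = 0$ on $\partial \Gamma$ together with $f > 0$ in $\Gamma$, preclude $\nu_\infty$ from coinciding with $\nu_{\mu_\infty}$ in the limit, yielding the contradiction. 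This renormalized-limit step is the principal obstacle in the argument, since it demands a quantitative statement that is uniform as $\lambda_k$ approaches $\partial \Gamma$ or goes to infinity and is where the hypotheses on $\Gamma$ and on the asymptotic behavior of $f$ are genuinely used.
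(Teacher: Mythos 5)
The paper does not actually prove Lemma~\ref{gnew-lem}: it imports the statement from \cite{GNEW,GJNEW} and uses it as a black box. So there is no in-paper argument to compare against; the question is whether your blind attempt constitutes a complete proof. It does not.

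Your first case is correct and well executed. If along a contradicting sequence the $\lambda_k$ stay in a compact subset $K'\subset\Gamma$, then $K'\times K$ is compact, the normalized gap $Q(\lambda,\mu)/(1+\sum f_i(\lambda))$ is continuous and nonnegative there by concavity, and the rigidity argument you give (zero gap $\Rightarrow$ $f$ affine on $[\lambda_\infty,\mu_\infty]$ $\Rightarrow$ $v^{T}D^{2}f\,v=0$ along the segment $\Rightarrow$ $D^{2}f\,v=0$ by negative semidefiniteness $\Rightarrow$ $Df$ constant along the segment $\Rightarrow$ $\nu_{\lambda_\infty}=\nu_{\mu_\infty}$) correctly shows the gap is strictly positive on the closed set $\{|\nu_\lambda-\nu_\mu|\geq\zeta\}$, hence bounded below. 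One small misattribution: the positivity of the components of $\nu_\lambda$ comes from \eqref{3I-20} (namely $f_i>0$), not from $\Gamma$ containing the positive cone; but the conclusion and the equivalence $|Df|\le\sum f_i\le\sqrt{n}\,|Df|$ you then use are fine.

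The genuine gap is your second case, and you flag it yourself. When $\lambda_k\to\partial\Gamma$ or $|\lambda_k|\to\infty$, you say you would ``renormalize by $|Df(\lambda_k)|$, analyze the limiting supporting-hyperplane geometry,'' and that \eqref{3I-40} ``precludes $\nu_\infty$ from coinciding with $\nu_{\mu_\infty}$ in the limit.'' But this is precisely the content of the lemma, not a reduction of it. After renormalizing, the limiting objects $\nu_\infty$ (a unit vector) and the limits of $f(\lambda_k)/\sum f_i(\lambda_k)$, $Df(\lambda_k)\cdot\lambda_k/\sum f_i(\lambda_k)$ are not attached to any point of $\Gamma$, so there is no ``$C^2$ rigidity at the limit'' argument available, and the claim that the structure conditions force a contradiction has to be proved, not asserted. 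In fact this is the step where the convexity of $\Gamma$, the concavity of $f$, and the behavior of $f$ on $\partial\Gamma$ from \eqref{3I-40} all enter in a quantitative, non-pointwise way. As written, the second case is an outline of what a proof would need to accomplish rather than a proof; without it the lemma remains unproved. If you want to push this through you would need a genuinely uniform estimate, e.g. a lower bound on $\nu_\lambda\cdot(\mu-\lambda)$ in terms of $\zeta$ and $K$ that is valid for all $\lambda\in\Gamma$ with $|\nu_\lambda-\nu_\mu|\ge\zeta$, exploiting the convexity of the superlevel sets $\{f\ge f(\mu)\}$ and the compactness of $K$ — that is the argument carried out in Guan's papers and it is not a routine limiting step.
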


We use the notation
\[
A^{\xi \eta} (x, \cdot, \cdot) := A (x, \cdot, \cdot) (\xi, \eta), \;\;
\xi,  \eta \in T^*_x M.
\]
and $U:=\nabla^2 u +A[u]$, $F(U) = f (\lambda (U))$. Under a local frame $e_{1}, \ldots, e_{n}$, $U_{ij}:=U(e_i,e_j)=\nabla_{ij} u +A^{ij}[u]$ and
\[
F^{ij} = \frac{\partial F}{\partial U_{ij}} (U), \;\;
F^{ij, kl} = \frac{\partial^2 F}{\partial U_{ij} \partial U_{kl}} (U).
\]
Let $\mathcal{L}$ be the linear operator locally defined by
\[
\mathcal{L} v := F^{ij} \nabla_{ij} v
    + (F^{ij} A^{ij}_{p_k} - \psi_{p_k}) \nabla_k v, \;\; v \in C^2 (M).
\]

In the process of deriving {\it a priori} second order estimates, see Section 3 below, we apply Lemma \ref{gnew-lem} with $\zeta=\zeta_0$ in \eqref{zeta0} (we will explain this in Remark \ref{rmk}),
and an immediate result shows that:
\begin{proposition}
\label{prop-L}
Assume that
\begin{equation}
\label{A2}
\mbox{$-\psi (x,z,p)$ and $A^{\xi \xi} (x,z,p)$ are concave in $p$},
\end{equation}
\begin{equation}
\label{A4}
- \psi_z, \; A^{\xi \xi}_{z}  \geq 0, \;\; \forall \, \xi \in T_x M.
\end{equation}
Then if $|\nu_{\mu} - \nu_{\lambda}| \geq \zeta_0$, we have
\begin{equation}
\label{L-u}
\mathcal{L} (\ul u - u) \geq \theta (1 + \sum F^{ii}) - \beta_{\varepsilon} (u - h).
\end{equation}
\end{proposition}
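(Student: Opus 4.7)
The plan is to work in a local frame that diagonalizes $U_{ij}$ at an arbitrary point $x_0$, so that $F^{ij} = \delta_{ij} f_i(\lambda)$, and split $\mathcal{L}(\ul u - u)$ into three digestible pieces. Using $\nabla_{ij} v = U_{ij} - A^{ij}[v]$ applied to $v = u$ and $v = \ul u$, I would rewrite
\[
\mathcal{L}(\ul u - u) = F^{ij}(\ul U_{ij} - U_{ij}) + F^{ij}\bigl(A^{ij}[u] - A^{ij}[\ul u] + A^{ij}_{p_k}[u]\nabla_k(\ul u - u)\bigr) - \psi_{p_k}[u]\nabla_k(\ul u - u),
\]
where $\ul U_{ij} := \nabla_{ij} \ul u + A^{ij}[\ul u]$, and then treat the three pieces separately.

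For the principal piece, I would combine the concavity of $F$ on $\mathcal{S}^{n\times n}$ with Lemma \ref{gnew-lem} applied at $\zeta = \zeta_0$. In the diagonal frame the standard symmetric-function-of-a-matrix inequality gives $F^{ij}(\ul U_{ij} - U_{ij}) \geq \sum_i f_i(\lambda)(\mu_i - \lambda_i)$, after which Lemma \ref{gnew-lem} yields
\[
F^{ij}(\ul U_{ij} - U_{ij}) \geq f(\mu) - f(\lambda) + \theta\bigl(1 + \sum F^{ii}\bigr).
\]
Since $u$ solves \eqref{jw-21} and $\ul u$ satisfies \eqref{3I-11s}, I would substitute $f(\lambda) = \psi[u] + \beta_\varepsilon(u - h)$ and $f(\mu) \geq \psi[\ul u]$ to produce the pre-cancellation estimate
\[
F^{ij}(\ul U_{ij} - U_{ij}) \geq \psi[\ul u] - \psi[u] - \beta_\varepsilon(u - h) + \theta\bigl(1 + \sum F^{ii}\bigr).
\]

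The remaining two pieces are handled by the structural hypotheses \eqref{A2}--\eqref{A4}. I would argue that the tensor $B := A[u] - A[\ul u] + A_{p_k}[u]\nabla_k(\ul u - u)$ is positive semidefinite, so that $F^{ij} B_{ij} \geq 0$ by ellipticity. For each $\xi \in T^*_x M$, the sign $\ul u \leq u$ together with $A^{\xi\xi}_z \geq 0$ lets me first replace $A^{\xi\xi}(x, \ul u, \nabla\ul u)$ by $A^{\xi\xi}(x, u, \nabla\ul u)$ with the correct inequality; $p$-concavity of $A^{\xi\xi}$ then supplies $A^{\xi\xi}(x, u, \nabla\ul u) - A^{\xi\xi}(x, u, \nabla u) \leq A^{\xi\xi}_{p_k}[u]\nabla_k(\ul u - u)$, and rearranging gives $B^{\xi\xi} \geq 0$. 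An identical two-step argument (first $-\psi_z \geq 0$, then $p$-concavity of $-\psi$) produces $-\psi_{p_k}[u]\nabla_k(\ul u - u) \geq \psi[u] - \psi[\ul u]$.

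Collecting the three estimates, the $\pm\psi[u]$ and $\pm\psi[\ul u]$ terms cancel and I arrive at exactly $\mathcal{L}(\ul u - u) \geq \theta(1 + \sum F^{ii}) - \beta_\varepsilon(u - h)$. The main obstacle is the very first inequality in the principal-piece estimate: one must be careful about how the eigenvalues $\mu_i$ of $\ul U$ are ordered against the weights $f_i(\lambda)$, since the clean comparison $\sum_i f_i(\lambda)\,\ul U_{ii} \geq \sum_i f_i(\lambda)\,\mu_i$ relies on the concavity-driven fact that $f_i$ and $\lambda_i$ are oppositely ordered together with a Schur--Horn majorization argument. The remaining ingredients are essentially bookkeeping once the two-step trick for stripping the $z$-dependence before invoking $p$-concavity is in hand.
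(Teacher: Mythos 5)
Your proposal is correct and follows essentially the same route as the paper: diagonalize $U$, split $\mathcal{L}(\ul u - u)$ into the principal piece plus the $A$- and $\psi$-remainders, bound the principal piece by Lemma \ref{gnew-lem} combined with the equation for $u$ and the subsolution inequality for $\ul u$, and absorb the remainders using the two-step argument (first strip $z$-dependence via \eqref{A4} and $\ul u \leq u$, then invoke $p$-concavity from \eqref{A2}); this is exactly what produces the paper's inequalities \eqref{concave}. The only difference is presentational: the paper jumps directly from Lemma \ref{gnew-lem} to the matrix-level bound on $F^{ii}(\ul U_{ii} - U_{ii})$ without comment, whereas you spell out the underlying step $F^{ii}(\ul U_{ii} - U_{ii}) \geq \sum f_i(\lambda)(\mu_i - \lambda_i)$ via Schur--Horn majorization and the opposite ordering of $f_i$ and $\lambda_i$ coming from concavity, which is the correct justification for that implicit passage.
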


\begin{proof}
For any $x\in M$, choose a smooth orthonormal local frames $e_1, \ldots, e_n$ about $x$ such that $\{U_{ij} (x)\}$ is diagonal, so is $\{F^{ij} (U) (x)\}$.
If $|\nu_{\mu} - \nu_{\lambda}| \geq \zeta_0$, then by Lemma \ref{gnew-lem}, we have
\[
F^{ii} (U) (\ul U_{ii} - U_{ii})  \geq  \psi [\ul u] - \psi [u] - \beta_\varepsilon (u - h) + \theta (1 + \sum F^{ii}).
\]
It follows from \eqref{A2} and \eqref{A4} that
\begin{equation}
\label{concave}
A^{ii}_{p_k} \nabla_k (\ul u- u)  \geq  A^{ii} [\ul u] - A^{ii}[u] \; \mbox{ and } - \psi_{p_k} \nabla_k (\ul u - u)  \geq  - \psi [\ul u] + \psi [u].
\end{equation}
Thus \eqref{L-u} is obtained.
\end{proof}

\begin{remark}
\label{rmk}
In another case $|\nu_{\mu} - \nu_{\lambda}| < \zeta_0$, we have by \eqref{zeta0} that $\nu_{\lambda} - \zeta_0 \mathbf{1} \in \Gamma_n$, and therefore
\begin{equation}
\label{F'}
F^{ii} \geq \frac{\zeta_0}{\sqrt{n}} \sum F^{kk}, \; \forall 1 \leq i \leq n.
\end{equation}
We also have in this case that, by the concavity of $F$,
\[
F^{ii} (U) (\ul U_{ii} - U_{ii}) \geq F(\ul U) - F(U) \geq  \psi [\ul u] - \psi [u] - \beta_\varepsilon (u - h)
\]
Then combining with \eqref{concave} we obtain
\begin{equation}
\label{L'}
\mathcal{L} (\ul u - u) \geq - \beta_{\varepsilon} (u - h).
\end{equation}
\end{remark}

\begin{remark}
Note that \eqref{L-u} and \eqref{L'} are the highlight of the paper.
\end{remark}

\section{Estimates for second order derivatives}

In this section, we prove {\it a priori} estimates of second order derivatives for an admissible solution $u \in \mathscr{U}$.
We see that $\mathrm{tr} (A [u]) \leq C$ on $\bar{M}$, where $C$ is independent of $\varepsilon$ and $C$ depends on $|u|_{C^1(\bar M)}$.
Let $G$ be the solution to
\[
\left\{
\begin{aligned}
\Delta G + C  \,& = 0 \; \mbox{ in } M, \\
            G \,& = \varphi \; \mbox{ on } \partial M,
\end{aligned}
\right.
\]
Then we have $u \leq G$ in $M$ by the maximum principle
since $\Delta u + C \geq \Delta u + \mathrm{tr} (A [u]) > 0$ in $M$.
Since $h > \varphi$ on $\partial M$, we have $h > G \geq u$ in a neighborhood of $\partial M$
in which $\beta_\varepsilon (u - h) \equiv 0$. Thus, in such a neighborhood of $\partial M$, the Dirichlet problem \eqref{jw-21} reduces to
\begin{equation}
\label{jw-21'}
\left\{
\begin{aligned}
f (\lambda ( \nabla^2 u  + A [u] )) \, & = \psi [u] \;\; \mbox{ in a neighborhood of } \partial M,\\
                                   u \,& = \varphi \;\; \mbox{ on }  \partial M,
\end{aligned}
\right.
\end{equation}
and hence by the arguments of Section 3 in \cite{GJNEW}, we obtain the boundary estimates for second order derivatives
\begin{equation}
\label{eqn-c2b}
|\nabla^2 u| \leq C \;\; \mbox{  on } \partial M
\end{equation}
under assumptions \eqref{3I-20}-\eqref{3I-40}, \eqref{3I-11s}, \eqref{A2}, \eqref{A4}, and
\begin{equation}
\label{f7}
\sum f_i (\lambda) \lambda_i \geq - K_0 (1+ \sum f_i), \;\;  \forall \lambda \in \Gamma,
\end{equation}
for some $K_0 \geq 0$, where the constant $C$ in \eqref{eqn-c2b} is independent of $\varepsilon$ and depends on $|u|_{C^1(\bar M)}$. Note that the condition \eqref{f7} is used to overcome the difficulty caused by the
presence of curvature in the boundary estimates \eqref{eqn-c2b} (see \cite{GOLD,GNEW,GJNEW}).

Therefore, it remains to estimate the interior second order derivatives $|\nabla^2 u|_{C^0 (M)}$ for the global estimates of second derivatives $|\nabla^2 u|_{C^0 (\bM)}$.
The following lemma will be needed which is key in both the second derivative estimates and the gradient estimates.
\begin{lemma}[\cite{BDJ,XB}]
\label{jw-lem1}
There exists a positive constant $c_0$, which is independent of $\varepsilon$ and depends on $|u|_{C^0(\bM)}$, such that
\begin{equation}
\label{jw-31}
0\leq \beta_\varepsilon (u-h) \leq c_0   \; \mbox{ in } M.
\end{equation}
\end{lemma}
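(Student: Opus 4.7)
The lower bound $\beta_\varepsilon(u-h) \geq 0$ is immediate from the definition in \eqref{eqn-penal}, so the substance of the lemma is the upper bound. The natural strategy is a straightforward maximum principle argument applied to $\beta_\varepsilon(u-h)$ on the compact set $\bar M$, leveraging the equation \eqref{jw-21} to convert a second-order estimate at an interior maximum into an algebraic bound in terms of the obstacle $h$.

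First I would let $x_0 \in \bar M$ be a point at which $\beta_\varepsilon(u-h)$ attains its maximum. If $x_0 \in \partial M$, then $u(x_0) = \varphi(x_0) < h(x_0)$ by the standing hypothesis $h > \varphi$ on $\partial M$, so $\beta_\varepsilon(u-h)(x_0) = 0$ and the estimate holds trivially with $c_0 = 0$. Otherwise $x_0 \in M$, and I may further assume $(u-h)(x_0) > 0$ (else the argument concludes as before). At such an interior positive maximum, since $\beta_\varepsilon$ is $C^2$ and strictly increasing on $(0,\infty)$, the standard first- and second-order conditions on $u - h$ at $x_0$ give
\[
\nabla u(x_0) = \nabla h(x_0), \qquad \nabla^2 u(x_0) \leq \nabla^2 h(x_0),
\]
as a pointwise identity and a symmetric-tensor inequality respectively.

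Next I would combine these with the equation \eqref{jw-21} evaluated at $x_0$, which reads $\beta_\varepsilon(u-h)(x_0) = f(\lambda(\nabla^2 u + A[u]))(x_0) - \psi[u](x_0)$. Setting $H_0 := \nabla^2 h(x_0) + A(x_0, u(x_0), \nabla h(x_0))$ and using $\nabla u(x_0) = \nabla h(x_0)$, the matrix inequality becomes $(\nabla^2 u + A[u])(x_0) \leq H_0$. Since $\lambda(\nabla^2 u + A[u])(x_0) \in \bar\Gamma$ by admissibility, and $\bar\Gamma$ is a closed convex cone containing $\bar\Gamma_n$, it follows that $\lambda(H_0) \in \bar\Gamma$ as well, so that $f$ may be applied to it. The monotonicity of $F(r) = f(\lambda(r))$ in the matrix argument (a consequence of \eqref{3I-20}) then yields $f(\lambda(\nabla^2 u + A[u]))(x_0) \leq f(\lambda(H_0))$, and therefore
\[
\beta_\varepsilon(u-h)(x_0) \leq f(\lambda(H_0)) - \psi(x_0, u(x_0), \nabla h(x_0)) =: c_0.
\]
The right-hand side depends only on $|h|_{C^2(\bar M)}$, the smooth dependence of $A$ and $\psi$ on their arguments over the bounded set determined by $|u|_{C^0(\bar M)}$ and $|\nabla h|_{C^0(\bar M)}$, and is in particular independent of $\varepsilon$; this is the dependence claimed.

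The only step that demands any attention is the verification that $\lambda(H_0) \in \bar\Gamma$ so that $f(\lambda(H_0))$ is meaningful and finite; this relies on the structural assumption that $\Gamma$ is a convex cone containing the positive cone, together with the matrix-version observation that if $\lambda(X) \in \bar\Gamma$ and $Y - X$ is positive semi-definite then $\lambda(Y) \in \bar\Gamma$ as well (via Weyl's monotonicity for eigenvalues and symmetry of $\Gamma$). Everything else is a routine application of the maximum principle to the penalized equation, entirely analogous to the arguments of \cite{BDJ,XB}.
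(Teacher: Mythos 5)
Your proposal is correct and is essentially the same maximum-principle argument used in the references \cite{BDJ,XB} that the paper cites for this lemma (the paper itself gives no proof, only the citation). The only cosmetic simplification worth noting is that one can work directly with an interior positive maximum of $u-h$ rather than of $\beta_\varepsilon(u-h)$, since $\beta_\varepsilon$ is nondecreasing and the boundary values force $u-h<0$ on $\partial M$; the key steps — $\nabla u(x_0)=\nabla h(x_0)$, $\nabla^2 u(x_0)\le\nabla^2 h(x_0)$, monotonicity of $F$ in the matrix argument, and the observation that $\lambda(H_0)\in\bar\Gamma$ because $\Gamma$ is a symmetric convex cone containing $\Gamma_n$ so that the admissible set is closed under adding positive semidefinite matrices — are exactly what is needed, and the resulting bound $c_0 = f(\lambda(H_0))-\psi(x_0,u(x_0),\nabla h(x_0))$ depends on $|u|_{C^0}$, $|h|_{C^2}$, $A$, $\psi$, $f$ and not on $\varepsilon$, as claimed.
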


Now we are ready to prove the following theorem.
\begin{theorem}
\label{thm-c2}
Assume that $f$ satisfies \eqref{3I-20}-\eqref{3I-40}, and
\begin{equation}
\label{gj-I105}
 \lim_{R \rightarrow \infty} f (R {\bf 1}) =  \infty.
\end{equation}
Let $u \in \mathscr{U}$. If \eqref{3I-11s}, \eqref{A2}-\eqref{A4} and \eqref{f7} hold. Then
\begin{equation}
\label{eqn-c2g}
|\nabla^2 u|_{C^0 (\bM)} \leq C
\end{equation}
where $C$ depends on $|u|_{C^1(\bar M)}$, $|\ul u|_{C^2(\bar M)}$ and other known data.
\end{theorem}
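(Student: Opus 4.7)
The plan is to combine the boundary estimate \eqref{eqn-c2b} with an interior bound obtained from the maximum principle applied to a Guan-type test function. Write $U := \nabla^2 u + A[u]$ and let $\lambda_1 \geq \cdots \geq \lambda_n$ denote its eigenvalues. I consider
\[
W(x) = \log \lambda_1(x) + \phi(|\nabla u|^2)(x) + a\,(\ul u - u)(x),
\]
with $\phi(s) = -\tfrac{1}{2}\log(2K-s)$ for $K > \sup_M |\nabla u|^2$ and $a > 0$ a large constant to be chosen. If the maximum of $W$ on $\bM$ is attained on $\partial M$, the estimate \eqref{eqn-c2g} follows at once from \eqref{eqn-c2b} and the $C^1$ bound; otherwise, fix a maximum point $x_0 \in M$ and aim to prove $\lambda_1(x_0) \leq C$ (after a standard perturbation argument that treats $\log \lambda_1$ as smooth when $\lambda_1$ has multiplicity).

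At $x_0$, pick a local orthonormal frame with $U_{ij}(x_0)$ diagonal, so that $F^{ij}(x_0)$ is simultaneously diagonal. The criticality $\nabla W(x_0) = 0$ yields
\[
\frac{\nabla_i U_{11}}{\lambda_1} = -\phi'\,\nabla_i(|\nabla u|^2) - a\,\nabla_i(\ul u - u).
\]
Applying $\mathcal L$ to $W$, using the standard eigenvalue derivative identities, the commutator $\nabla_{ii} U_{11} = \nabla_{11} U_{ii} + (\text{curvature, lower order})$, and twice-differentiating the equation $F(U) = \psi[u] + \beta_\varepsilon(u-h)$ at $x_0$ gives, schematically,
\[
0 \geq \mathcal L W \geq \frac{\sum F^{ii} \lambda_i^2}{\lambda_1} - \frac{F^{ii}(\nabla_i U_{11})^2}{\lambda_1^2} + \phi''\,F^{ii}(\nabla_i|\nabla u|^2)^2 + a\,\mathcal L(\ul u - u) - C \lambda_1\bigl(1 + \sum F^{ii}\bigr).
\]
The penalty contributions generated by differentiating $\beta_\varepsilon(u-h)$ twice are favorable in sign because $\beta'_\varepsilon, \beta''_\varepsilon \geq 0$, and the zeroth-order penalty is controlled by Lemma \ref{jw-lem1}.

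The decisive step is the dichotomy coming from \eqref{zeta0}. When $|\nu_\mu - \nu_\lambda|(x_0) \geq \zeta_0$, Proposition \ref{prop-L} gives $\mathcal L(\ul u - u) \geq \theta(1 + \sum F^{ii}) - c_0$, so fixing $a$ large enough makes $a\theta \sum F^{ii}$ swallow all the negative terms (including those quadratic in $a$ produced when the critical relation above is substituted to eliminate $\nabla_i U_{11}$), forcing $\lambda_1(x_0) \leq C$. When $|\nu_\mu - \nu_\lambda|(x_0) < \zeta_0$, Remark \ref{rmk} furnishes the uniform pinching $F^{ii} \geq \tfrac{\zeta_0}{\sqrt n}\sum F^{kk}$, so $\sum F^{ii}\lambda_i^2/\lambda_1$ dominates the third-order term (via Cauchy--Schwarz against the critical relation), while \eqref{L'} disposes of the $a\,\mathcal L(\ul u - u)$ contribution. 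Hypothesis \eqref{gj-I105} is used to guarantee that $\sum F^{ii}$ stays bounded below by a positive constant when $\lambda_1$ is very large, so both branches deliver a bound uniform in $\varepsilon$.

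I expect the main technical obstacle to be the coordinated treatment of the third-order term $F^{ii}(\nabla_i U_{11})^2/\lambda_1^2$ together with the first-order cross terms $\beta'_\varepsilon \nabla_i u$ arising after differentiating the right-hand side twice: substituting the critical-point identity generates contributions quadratic in $a$ and in $\phi'$ whose sizes and signs must be delicately balanced against the positive buffers $a\theta\sum F^{ii}$ (Case 1) and $\phi'' F^{ii}(\nabla_i|\nabla u|^2)^2$ combined with $\sum F^{ii}\lambda_i^2/\lambda_1$ (Case 2). Once $\lambda_1(x_0) \leq C$ is established, combining with the boundary estimate \eqref{eqn-c2b} and the admissibility constraint $\lambda(U) \in \overline\Gamma$ completes the proof of \eqref{eqn-c2g}.
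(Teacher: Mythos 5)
Your high-level route is the same as the paper's: an interior Pogorelov-type estimate via the maximum principle applied to a test function of the form $\log \lambda_1(U) + \phi(|\nabla u|^2) + a(\ul u - u)$, closed off by the boundary bound \eqref{eqn-c2b}, with the decisive step being the dichotomy $|\nu_{\mu} - \nu_{\lambda}| \gtrless \zeta_0$ provided by Proposition~\ref{prop-L} and Remark~\ref{rmk}. Your Case~2 treatment (pinching $F^{ii} \geq \tfrac{\zeta_0}{\sqrt n}\sum F^{kk}$, plus the observation that \eqref{gj-I105} and concavity keep $\sum F^{ii}$ bounded below when $f$ is bounded) is essentially correct and in the spirit of the paper's \eqref{case2}--\eqref{subb1}. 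The paper's auxiliary function is $\phi = \tfrac{\delta}{2}|\nabla u|^2 + b(\ul u - u)$ with $\delta$ small; you use the Chou--Wang cutoff $-\tfrac12\log(2K - |\nabla u|^2)$. Both give the same type of good quadratic term (note the scaling: it is $\mathrm{const}\cdot\sum F^{ii}U_{ii}^2$ coming from $\phi'\,F^{ii}(\nabla_{ik}u)^2$, not $\sum F^{ii}\lambda_i^2/\lambda_1$ as in your schematic).

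The genuine gap is in your Case~1 absorption. After substituting the critical identity $\nabla_i U_{11}/U_{11} = -\nabla_i\phi$ into the third-order term $-F^{ii}(\nabla_i U_{11})^2/U_{11}^2$ and using $\phi'' = 2(\phi')^2$ to cancel the $(\phi')^2$-weighted piece, what remains is a contribution of size $-C a^2\, F^{ii}(\nabla_i(\ul u - u))^2 \geq -Ca^2\sum F^{ii}$ \emph{over all indices} $i$. You claim that fixing $a$ large makes $a\theta\sum F^{ii}$ swallow this; that cannot happen, since $Ca^2 > a\theta$ once $a$ is large. Nor is it absorbed by $\sum F^{ii}U_{ii}^2$, because at indices where $U_{ii}$ is small (but $F^{ii}$ is not) the quadratic term vanishes. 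What rescues the argument in the paper is the Andrews--Gerhardt concavity estimate for $-F^{ij,kl}\nabla_1U_{ij}\nabla_1U_{kl}$ combined with the index split $J=\{i: U_{ii}\leq -sU_{11}\}$ and $K$ its complement (cf.\ \eqref{gj-S140}): this routes the dangerous third-order contribution onto $\sum_{i\in J}F^{ii}$ and $F^{11}$ only, where $|U_{ii}|\geq sU_{11}$ (resp.\ $U_{11}$), so the good term produces $c_1 s^2 U_{11}^2\sum_{i\in J}F^{ii} + c_1 U_{11}^2 F^{11}$, which dominates the $Cb^2$ coefficients once $U_{11}$ is large. Your proposal omits this step entirely, so Case~1 as written does not close. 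Relatedly, controlling the penalty only by $\beta_\varepsilon \leq c_0$ from Lemma~\ref{jw-lem1} is too crude here: the paper keeps the favorable $-c_1\beta'_\varepsilon(u-h)$ term and uses the explicit cubic form of $\beta_\varepsilon$ to show $b\beta_\varepsilon(u-h) - c_1\beta'_\varepsilon(u-h)\leq 0$ for $\varepsilon$ small, which is what removes the penalty cleanly and keeps the final bound independent of $\varepsilon$.
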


\begin{proof}
Set
\[
W(x) = \max_{ \xi \in T_x M, |\xi| = 1} ( A^{\xi\xi} (x, u, \nabla u)+ \nabla_{\xi \xi} u) e^\phi, \;\; x \in \bM,
\]
where $\phi$ is a function to be determined. Assume that $W$ is achieved at an
interior point $x_{0} \in M$ in a unit direction $\xi \in T_{x_0} M$. Choose a smooth orthonormal local frame
$e_{1}, \ldots, e_{n}$ about $x_{0}$ such that $\xi = e_1$, $\nabla_i e_j (x_0) = 0$ and that
$U_{ij} (x_{0})$ is diagonal. We assume $U_{11} (x_0) > 0$ and
\[
U_{11} (x_0) \geq \cdots \geq U_{nn} (x_0).
\]

At the point $x_{0}$ where the function $\log U_{11} + \phi$ (defined near $x_{0}$) attains its maximum, we have
\begin{equation}
\label{gs3}
\frac{\nabla_{i} U_{11}}{U_{11}} + \nabla_i \phi = 0, \;\; i = 1, \cdots, n
\end{equation}
and
\begin{equation}
\label{gs4}
\frac{\nabla_{ii} U_{11}}{U_{11}}
   - \Big(\frac{\nabla_i U_{11}}{U_{11}}\Big)^2 + \nabla_{ii} \phi \leq 0.
\end{equation}
Differentiating equation \eqref{jw-21} twice and using \eqref{gs3}, we obtain at $x_{0}$,
\begin{equation}
\label{equa2}
\begin{aligned}
F^{ii}\nabla_{11} U_{ii} +& \, F^{ij,kl} \nabla_{1} U_{ij} \nabla_{1}U_{kl}\\
 \geq & \, \psi_{p_k} \nabla_k U_{11}
    + \psi_{p_k p_l} \nabla_{1k} u \nabla_{1l} u + \beta''_\varepsilon (u - h) |\nabla_1 (u - h)|^2 \\
    & + \beta'_\varepsilon (u - h) \nabla_{11} (u - h) - C U_{11}\\
  \geq & \, -U_{11} \psi_{p_k} \nabla_k \phi - C U_{11} + \psi_{p_1p_1} U_{11}^2 + (U_{11} - C) \beta'_\varepsilon (u - h)
  \end{aligned}
\end{equation}
provided $U_{11}$ is sufficiently large.
Recall the formula for interchanging order of covariant derivatives
\[
\begin{aligned}
\nabla_{ijkl} v - \nabla_{klij} v
= R^m_{ljk} \nabla_{im} v & + \nabla_i R^m_{ljk} \nabla_m v
      + R^m_{lik} \nabla_{jm} v \\
  & + R^m_{jik} \nabla_{lm} v
      + R^m_{jil} \nabla_{km} v + \nabla_k R^m_{jil} \nabla_m v.
\end{aligned}
\]
It follows
\begin{equation}
\label{chu2}
\begin{aligned}
F^{ii}\nabla_{ii}U_{11} \geq F^{ii}\nabla_{11} U_{ii} + F^{ii}(\nabla_{ii} A^{11} - \nabla_{11} A^{ii}) - C U_{11}\sum F^{ii}.
\end{aligned}
\end{equation}
Differentiating equation \eqref{jw-21} once, we obtain
\begin{equation}
\label{diffonce}
F^{ii} (\nabla_{kii}u+\nabla_k A^{ii}) = \nabla_k \psi + \nabla_k \beta_{\varepsilon}(u - h)
\end{equation}
Moreover, we use the formula
\begin{equation}
\label{hess-A70'}
 \nabla_{ikj} v - \nabla_{jik} v = R^l_{kij} \nabla_l v,
\end{equation}
to derive that
\begin{equation}
\label{cha}
\begin{aligned}
F^{ii}(\nabla_{ii}A^{11}-\nabla_{11} A^{ii})
\geq \,& F^{ii}(A^{11}_{p_k} \nabla _{iik} u - A^{ii}_{p_k} \nabla _{11k} u ) - CU_{11} \sum F^{ii} \\
           & +  F^{ii}(A^{11}_{p_i p_i} U^2_{ii} - A^{ii}_{p_1 p_1} U_{11}^2) - C \sum F^{ii} \\
\geq \,& U_{11} F^{ii} A_{p_k}^{ii} \nabla_k \phi - CU_{11} (1 + \sum F^{ii}) \\
           & -  C\sum_{i\geq 2} F^{ii} U_{ii}^2 - U_{11}^2 \sum_{i\geq 2} F^{ii} A_{p_1 p_1}^{ii} - C \beta'_{\varepsilon}(u - h).
\end{aligned}
\end{equation}
Thus, by substituting \eqref{chu2} into \eqref{gs4} and using \eqref{equa2} and \eqref{cha}, we obtain
\begin{equation}
\label{gs5}
\begin{aligned}
\mathcal{L} \phi
\leq \,& E - \psi_{p_1 p_1} U_{11} + \frac{C}{U_{11}} \sum F^{ii} U_{ii}^2 +  U_{11} \sum_{i\geq 2}F^{ii}A^{ii}_{p_1 p_1} \\
            & + \bigg(\frac{C}{U_{11}} - 1 \bigg) \beta'_{\varepsilon}(u - h) + C\sum F^{ii} + C
\end{aligned}
\end{equation}
where
\[
E = \frac{1}{U_{11}^2} F^{ii} (\nabla_i U_{11})^2
            + \frac{1}{U_{11}} F^{ij,kl} \nabla_{1} U_{ij} \nabla_{1} U_{kl}.
\]

Let
\[
\phi = \frac{\delta |\nabla u |^2}{2} + b (\ul u - u)
\]
where $b$, $\delta$ are undetermined constants satisfying $0 < \delta < 1 \leq b$. Direct computation yields
\[
\nabla_{i} \phi  =  \delta \nabla_k u \nabla_{ik} u + b \nabla_i (\ul u - u)
\]
and
\[
\begin{aligned}
\nabla_{ii} \phi
     = \,& \delta  (\nabla_{ik} u )^2 + \delta \nabla_k u  \nabla_{iik} u  + b \nabla_{ii} (\ul u - u)  \\
   \geq \,& \frac{\delta}{2} U_{ii}^2 - C \delta + \delta \nabla_k u \nabla_{iik} u + b \nabla_{ii} (\ul u - u).
\end{aligned}
\]
From \eqref{diffonce}, we have
\[
\begin{aligned}
 F^{ii} \nabla_k u \nabla_k U_{ii}
      = \,&  \nabla_k u \psi_{x_k} + \psi_u |\nabla u|^2 + \psi_{p_l} \nabla_k u \nabla_{kl}u \\
          & + \beta'_\varepsilon (u - h) (|\nabla u|^2 - \nabla u \cdot \nabla h).
\end{aligned}
\]
We then have by \eqref{hess-A70'} that
\[
F^{ii}\nabla_k u \nabla_{iik} u
          \geq (\psi_{p_l} -  F^{ii} A^{ii}_{p_l})\nabla_k u \nabla_{kl}u - C (1 + \sum F^{ii}) - C \beta'_{\varepsilon} (u - h).
\]
Therefore,
\begin{equation}
\label{gs8.5}
\mathcal{L}\phi \geq  b \mathcal{L}(\ul u - u) + \frac{\delta}{2} F^{ii} U_{ii}^2  - C \delta \beta'_{\varepsilon}(u - h)- C\sum F^{ii}-C.
\end{equation}

Now we estimate $E$ in \eqref{gs5} following \cite{GOLD} (see also \cite{U}) by using an
inequality shown by Andrews~\cite{A} and Gerhardt~\cite{GC}.
For fixed $0 < s \leq 1/3$, let
\[
J = \{i: U_{ii} \leq - s U_{11}\}, \;\;
K = \{i: U_{ii} > - s U_{11}\}.
\]
Similar to \cite{GOLD}, we have
\[
- F^{ij, kl} \nabla_1 U_{ij} \nabla_1 U_{kl}
         \geq \frac{2 (1-s)}{(1+s) U_{11}} \sum_{i \in K} (F^{ii} - F^{11})((\nabla_i U_{11})^2 - CU_{11}^2/s).
\]
Then,
\begin{equation}
\label{gj-S140}
\begin{aligned}
 E \leq \,& \frac{1}{U_{11}^2} \sum_{i \in J} F^{ii} (\nabla_i U_{11})^2
             + C \sum_{i \in K} F^{ii}
             + \frac{C F^{11}}{U_{11}^2}  \sum_{i \notin J} (\nabla_i U_{11})^2  \\
   \leq \,& \sum_{i \in J} F^{ii} (\nabla_i \phi)^2
             +  C \sum F^{ii} + C F^{11} \sum (\nabla_i \phi)^2 \\
   \leq \,& C b^2 \sum_{i \in J} F^{ii}  + C \delta^2 \sum F^{ii} U_{ii}^2
            +  C \sum F^{ii} + C (\delta^2 U_{11}^2 + b^2) F^{11}.
\end{aligned}
\end{equation}
Combining \eqref{gs5}, \eqref{gs8.5} and \eqref{gj-S140}, we obtain
\[
\begin{aligned}
b \mathcal{L}(\ul u - u)
\leq \,& \Big(C\delta^2-\frac{\delta}{2} + \frac{C}{U_{11}}\Big)F^{ii}U_{ii}^2 + C b^2\sum_{i \in J}F^{ii}+ C(1 + \sum F^{ii})\\
                & + C b^2 F^{11} + \Big( C \delta - 1 + \frac{C}{U_{11}}\Big) \beta'_{\varepsilon}(u-h).
\end{aligned}
\]
Taking $\delta<1$ small enough such that
\[
c_1:= -\frac{1}{2}\max \{C\delta^2-\frac{\delta}{2}, C \delta - 1\} > 0
\]
Then we may assume
\[
\max \{C\delta^2-\frac{\delta}{2} + \frac{C}{U_{11}}, \frac{C}{U_{11}} + C \delta -1\} \leq - c_1,
\]
otherwise, we have $U_{11} \leq C/c_1$ and we are done. Therefore,
\begin{equation}
\label{left}
\begin{aligned}
b \mathcal{L}(\ul u - u)
\leq  \,&  - c_1 F^{ii}U_{ii}^2 + C b^2\sum_{i \in J}F^{ii} \\
               & + C(1 + \sum F^{ii})+ C b^2 F^{11} - c_1 \beta'_{\varepsilon}(u-h).
\end{aligned}
\end{equation}

So far, the proof above follows essentially \cite{BDJ}.
From now on we use the new method introduced by Guan \cite{GNEW}.

Let $\tilde{\mu}=\mu(x_0)$ and $\tilde{\lambda}=\lambda(U(x_0))$.
If $|\nu_{\tilde{\mu}} - \nu_{\tilde{\lambda}}| \geq \zeta_0$, we apply \eqref{L-u} to \eqref{left} and obtain that
\begin{equation}
\label{L}
\begin{aligned}
( b \theta - C )( 1 + \sum F^{ii} )
\leq \,& - c_1 F^{ii}U_{ii}^2  + C b^2 F^{11} + C b^2\sum_{i \in J}F^{ii} \\
            & +  b \beta_{\varepsilon}( u - h ) - c_1 \beta'_{\varepsilon}(u - h).
\end{aligned}
\end{equation}
Fix $b>1$ sufficiently large such that $b \theta - C > 0$, and it follows from Lemma \ref{jw-lem1} that
\[
b \beta_{\varepsilon}( u - h ) - c_1 \beta'_{\varepsilon}(u - h)
      \leq \frac{(u - h)^2}{\varepsilon}( b (c_0 \varepsilon)^{1/3} - 3c_1) \leq 0
\]
if $\varepsilon \leq ( 3c_1/bc_0^{1/3})^3$. Then \eqref{L} yields
\[
c_1 F^{ii}U_{ii}^2 - C b^2\sum_{i \in J}F^{ii} - C b^2 F^{11} \leq 0
\]
when $\varepsilon$ is small. Note that $|U_{ii}| \geq s U_{11}$ for $i \in J$.
It follows that
\[
(c_1 s^2 U^2_{11} - C b^2) \sum_{i \in J} F^{ii} + (c_1 U^2_{11} - C b^2) F^{11} \leq 0
\]
This implies a bound $U_{11} (x_0) \leq C b^2/(c_1 s^2)$.

Next suppose $|\nu_{\tilde{\mu}} - \nu_{\tilde{\lambda}}| < \zeta_0$. We then obtain by applying \eqref{L'} to  \eqref{left}  that
\[
c_1 F^{ii} U_{ii}^2 \leq C b^2 (1 + \sum f^{ii}) + b \beta_{\varepsilon}( u - h ) - c_1 \beta'_{\varepsilon}(u - h).
\]
Again we can choose $\varepsilon$ small enough such that
$b \beta_{\varepsilon}( u - h ) - c_1 \beta'_{\varepsilon}(u - h) \leq 0$.
Thus we have by \eqref{F'},
\begin{equation}
\label{case2}
\frac{ c_1 \zeta_0 |\tilde{\lambda}|^2}{\sqrt{n}} \sum F^{ii} \leq c_1 F^{ii} U_{ii}^2 \leq C b^2 (1 + \sum F^{ii})
\end{equation}
where $|\tilde{\lambda}|^2 = \sum \tilde{\lambda}_i^2=\sum U_{ii}^2$.
By the concavity of $f$, we have
\[
\begin{aligned}
|\tilde{\lambda}| \sum f_i
\geq \,& \sum f_i \tilde{\lambda}_i + f(|\tilde{\lambda}| \mathbf{1}) - f(\tilde{\lambda})\\
\geq \,& f(|\tilde{\lambda}| \mathbf{1}) - \psi[u](x_0)- c_0 - \frac{1}{4 |\tilde{\lambda}|} \sum f_i \tilde{\lambda}_i^2 - |\tilde{\lambda}| \sum f_i
\end{aligned}
\]
where $c_0$ comes from Lemma \ref{jw-lem1}. Therefore,
\begin{equation}
\label{subb1}
\begin{aligned}
|\tilde{\lambda}|^2 \sum f_i
\geq \,& \frac{|\tilde{\lambda}|}{2}(f(|\tilde{\lambda}| \mathbf{1}) - \psi[u](x_0) - c_0) - \frac{1}{8} \sum f_i \lambda_i^2 \\
\geq \,& |\tilde{\lambda}| - \frac{1}{8} \sum f_i \lambda_i^2
\end{aligned}
\end{equation}
when $|\tilde{\lambda}|$ is large enough satisfying $f(|\tilde{\lambda}| \mathbf{1})\geq 2 + c_0 + \max_{x\in\bM}\psi[u]$ by \eqref{gj-I105}. Combining \eqref{case2} and \eqref{subb1} we have
\[
|\tilde{\lambda}|^2 \sum F^{ii} + |\tilde{\lambda}| \leq C b^2 (1 + \sum F^{ii}),
\]
which gives $|\tilde{\lambda}|\leq C b^2$.
\end{proof}

\section{Gradient estimates and existence}

For the gradient estimates, we need some growth conditions in usual and assume that
\begin{equation}
\label{A1}
\left\{ \begin{aligned}
    p \cdot \nabla_x A^{\xi \xi} (x, z, p) + |p|^2  A^{\xi \xi}_z (x, z, p)
         \,& \leq \bar{\omega}_1 (x, z) |\xi|^2 (1 + |p|^{\gamma_1}), \\
    p \cdot \nabla_x \psi (x, z, p)  + |p|^2 \psi_z (x, z, p)
         \,& \geq - \bar{\omega}_2 (x, z) (1 + |p|^{\gamma_2}),
  \end{aligned} \right.
\end{equation}
for some constants $0 < \gamma_1, \gamma_2  < 4$ and some continuous functions $\bar{\omega}_1, \bar{\omega}_2 \geq 0$.
In addition to \eqref{A1}, assume that
\begin{equation}
\label{3I-50}
f_{j} (\lambda) \geq \nu_{0} \left( 1 + \sum f_{i} (\lambda) \right) \;\;
 \mbox{ for any }
  \lambda \in \Gamma \mbox{ with } \lambda_j < 0,
\end{equation}
where $\nu_0$ is a uniform positive constant. Note that \eqref{3I-50} is commonly used in deriving gradient estimates, see e.g. \cite{G}, \cite{U} and references therein. We also
need the following growth conditions:
\begin{equation}
\label{A5}
 p \cdot D_p \psi (x, z, p),
    \; - p \cdot D_p A^{\xi \xi} (x, z, p)/|\xi|^2
     \leq \bar{\omega}(x, z)  (1 + |p|^{\gamma})
\end{equation}
and
\begin{equation}
\label{gj-G20**}
|A^{\xi \eta} (x, z, p)|
     \leq \bar{\omega} (x, z) |\xi||\eta| (1 + |p|^{\gamma}),
\;\; \forall \, \xi, \eta \in T_x \bM, \xi \perp \eta,
\end{equation}
for some constant $\gamma \in (0, 2)$ and some continuous function $\bar{\omega} \geq 0$.

\begin{theorem}
\label{obs-th1b}
Assume that \eqref{3I-20}-\eqref{3I-30}, \eqref{A2}, \eqref{f7} hold.
Let $u \in C^3 (\bM)$ be an admissible solution to \eqref{jw-21} with $u\geq \ul u$ on $\bM$.
Suppose that \eqref{A1}-\eqref{gj-G20**}.
Then for $\varepsilon$ sufficiently small, we have
\begin{equation}
\label{obs-gradient}
\max_{\bar{M}} |\nabla u| \leq C (1 + \max_{\partial M} |\nabla u|),
\end{equation}
where $C$ depends on $|u|_{C^0 (\bar{M})}$, $|\ul u|_{C^2 (\bar{M})}$ and other known data.
\end{theorem}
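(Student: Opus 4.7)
The plan is to mimic the two-case strategy used in Theorem \ref{thm-c2}. Consider the auxiliary function
\[
W(x) = \log\bigl(\tfrac{1}{2}|\nabla u(x)|^2\bigr) + b\bigl(\underline{u}(x) - u(x)\bigr), \qquad x \in \bar M,
\]
with $b \geq 1$ to be chosen. If $W$ achieves its maximum on $\partial M$, then \eqref{obs-gradient} is automatic, so I may suppose it is attained at an interior point $x_0 \in M$ at which $w := \tfrac{1}{2}|\nabla u|^2$ is as large as required. Choose an orthonormal local frame $e_1, \ldots, e_n$ around $x_0$ diagonalising $U_{ij}(x_0)$. The critical point conditions then read $\nabla_i w / w = b \nabla_i(u - \underline{u})$ and $\mathcal{L} W \leq 0$ at $x_0$.

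Next I compute $\mathcal{L} W$ at $x_0$. From $\nabla_i w = \nabla_k u \, \nabla_{ik} u$, the Ricci identity $\nabla_{iik} u = \nabla_{kii} u + R^{l}_{iik}\nabla_l u$, and differentiating \eqref{jw-21} once, I can write $F^{ii}\nabla_k u \,\nabla_{iik} u$ in terms of $F^{ii}(\nabla_{ik} u)^2$, derivatives of $\psi$ and $A$ in $x,z,p$, and the penalty term $\beta'_\varepsilon(u-h)\,\nabla u \cdot \nabla(u-h)$. The hypotheses \eqref{A1}, \eqref{A5} and \eqref{gj-G20**} limit all $\psi$- and $A$-contributions to growth of the form $\bar\omega(1+|\nabla u|^{1+\gamma^*})$ with some $\gamma^* \in \{\gamma,\gamma_1,\gamma_2\} \subset [0,4)$. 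Combining with the expansion of $\mathcal{L}(b(\underline{u}-u))$, I arrive at an inequality of schematic form
\[
\frac{F^{ii}(\nabla_{ik}u)^2}{w} + b\,\mathcal{L}(\underline{u}-u)
\,\leq\, b^2 F^{ii}\bigl(\nabla_i(\underline{u}-u)\bigr)^2
+ \frac{C(1+|\nabla u|^{1+\gamma^*})}{w}\sum F^{ii} + \mathrm{(penalty)} + C.
\]
Since $1+\gamma^* < 4$ and $1+\gamma<2$, division by $w \sim |\nabla u|^2$ (inherited from the $\log$) renders every $|\nabla u|^{1+\gamma^*}/w$ term strictly subquadratic in $|\nabla u|$, so these terms are absorbable into $F^{ii}(\nabla_{ik}u)^2/w$ once appropriate lower bounds on $F^{ii}$ are available.

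Such lower bounds come from splitting into the usual Guan cases at $x_0$. Let $\tilde\mu = \mu(x_0)$ and $\tilde\lambda = \lambda(U(x_0))$. When $|\nu_{\tilde\mu} - \nu_{\tilde\lambda}| \geq \zeta_0$, Proposition \ref{prop-L} gives $\mathcal{L}(\underline{u}-u) \geq \theta(1 + \sum F^{ii}) - \beta_\varepsilon(u-h)$; choosing $b$ so large that $b\theta$ dominates the gradient-growth coefficients makes $\sum F^{ii}$ absorb them, while the surviving $b\beta_\varepsilon(u-h) - c_1\beta'_\varepsilon(u-h)/w$ becomes non-positive for $\varepsilon$ small via Lemma \ref{jw-lem1}, exactly mimicking the penalty bookkeeping of Theorem \ref{thm-c2}. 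When $|\nu_{\tilde\mu} - \nu_{\tilde\lambda}| < \zeta_0$, \eqref{F'} gives $F^{ii} \geq (\zeta_0/\sqrt{n})\sum F^{kk}$ for every $i$, so $F^{ii}(\nabla_{ik}u)^2/w$ dominates a positive multiple of $(\sum F^{kk})\,|\nabla^2 u|^2/w$; finally \eqref{3I-50} supplies the missing lower bound $F^{jj} \geq \nu_0(1+\sum F^{ii})$ for indices $j$ with $U_{jj}<0$, which together with the concavity bound \eqref{subb1} of $f$ forces $w(x_0)$ to be bounded.

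The main obstacle is the twofold bookkeeping that makes the penalty disappear: the new term $\beta'_\varepsilon(u-h)\nabla u\cdot\nabla(u-h)$ produced by differentiating \eqref{jw-21} has no favourable sign (it involves $\nabla u\cdot\nabla h$), and so must be absorbed by the negative multiple of $\beta'_\varepsilon$ coming from $b\,\mathcal{L}(\underline{u}-u)$ together with the smallness of $\varepsilon$, forcing a joint choice of $b$ and $\varepsilon$ in the spirit of the computation $b(c_0\varepsilon)^{1/3}\leq 3c_1$ of Theorem \ref{thm-c2}. A secondary subtlety is matching the exponent thresholds $\gamma_1,\gamma_2<4$ and $\gamma<2$ against the dividend $w\sim|\nabla u|^2$ from the $\log$; these thresholds are precisely what allows the $\psi$- and $A$-derivative error terms to be dominated by $F^{ii}(\nabla_{ik}u)^2/w$ in both cases, and replacing $\log w$ by $w^\alpha$ would change the admissible range.
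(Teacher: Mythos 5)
Your proposal takes a genuinely different route from the paper, but it contains a structural gap that would prevent it from closing.

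The paper's proof of Theorem \ref{obs-th1b} does \emph{not} use the Guan two-case dichotomy (Proposition \ref{prop-L}, Lemma \ref{gnew-lem}, or the split on $|\nu_{\tilde\mu}-\nu_{\tilde\lambda}|$). Those tools are reserved for the second-order estimates of Theorem \ref{thm-c2}. Instead, the gradient estimate maximizes $|\nabla u|\,\phi^{-1/2}$ with $\phi = -u + \sup_M u + 1$, and condition \eqref{f7} enters only through the barrier computation $-F^{ii}\nabla_{ii}\phi = F^{ii}\nabla_{ii}u \geq -K_0(1+\sum F^{ii}) - F^{ii}A^{ii}$ (inequality \eqref{obs-g17}); you never invoke \eqref{f7} in your outline, even though it is one of the hypotheses you list. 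The positive term that dominates the subquadratic errors is $C_0 F^{ii}|\nabla_i u|^2$, which appears precisely because $\nabla_i\phi = -\nabla_i u$ so the second-order maximum condition \eqref{g2} produces $F^{ii}|\nabla_i\phi|^2/(4\phi^2) = F^{ii}|\nabla_i u|^2/(4\phi^2)$ with $\phi$ uniformly bounded. This is then exploited together with \eqref{3I-50} after showing that the critical-point identity \eqref{g1} forces $U_{11} \leq -|\nabla u|^2/(2\phi) + C(1+|\nabla u|+|\nabla u|^\gamma) < 0$, giving $F^{11} \geq \nu_0(1+\sum F^{ii})$.

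Your auxiliary function $W = \log(\tfrac12|\nabla u|^2) + b(\ul u - u)$ cannot reproduce either of these two mechanisms. The gradient of the barrier term $b(\ul u - u)$ is $O(b)$, uniformly bounded, so the critical-point condition $\nabla_i w/w = b\,\nabla_i(u-\ul u)$ gives $|\nabla_i w| \leq C b\,w$ rather than $|\nabla_1 w| \gtrsim |\nabla u|\,w$, and the candidate positive term $F^{ii}(\nabla_i w)^2/w^2 = b^2 F^{ii}(\nabla_i(u-\ul u))^2 \leq Cb^2\sum F^{ii}$ is bounded — it cannot absorb error terms that grow like $|\nabla u|^{\gamma}\sum F^{ii}$ with $\gamma>0$. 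Moreover, solving the critical-point relation for $\nabla_{11}u$ in the direction $\nabla_1 u \geq |\nabla u|/n$ yields $\nabla_{11}u \approx \tfrac{b}{2}|\nabla u|^2 > 0$, i.e.\ the \emph{wrong sign}: you do not get $U_{11} < 0$, so \eqref{3I-50} is simply not applicable, and invoking \eqref{subb1} at the end is also off the mark — that inequality is part of the $C^2$-estimate argument and relies on the noncompactness condition \eqref{gj-I105}, which is not among the hypotheses of Theorem \ref{obs-th1b}.

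Finally, even if you patch the barrier to match the paper's (say $\phi = -u + \sup u + 1$), the two-case split then becomes superfluous: the paper's one-shot argument already closes. Proposition \ref{prop-L} would in any case not help you bound $b\theta(1+\sum F^{ii})$ against terms like $C|\nabla u|\sum F^{ii}$, since $b$ is a fixed constant and $|\nabla u|$ is unbounded a priori. The penalty bookkeeping you describe is fine, but it is not where the difficulty lies.
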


The gradient estimates \eqref{obs-gradient} can be derived as in \cite{BDJ} using condition \eqref{f7} in place of (2.6) in \cite{BDJ}. We outline the proof here for completeness, and the reader can refer to \cite{BDJ} for more details and another group of assumptions that guarantees \eqref{obs-gradient}.

Suppose $|\nabla u| \phi^{-1/2}$
achieves a maximum at an interior point $x_0 \in M$,
where $\phi$ a positive function to be determined. As in Section 3 we choose smooth orthonormal local frames $e_1, \ldots, e_n$
about $x_0$ such that $\nabla_{e_i} e_j = 0$ at $x_0$
and $\{U_{ij} (x_0)\}$ is diagonal. Set $w = |\nabla u|$. Then at $x_0$, we have
\begin{equation}
\label{g1}
\frac{\nabla_i w}{w} - \frac{\nabla_i \phi}{2\phi} = 0,
\end{equation}
\begin{equation}
\label{g2}
\frac{\nabla_{ii} w}{w} + \frac{|\nabla_{i} \phi|^2}{4 \phi^2}
   - \frac{\nabla_{ii} \phi}{2 \phi} \leq 0
\end{equation}
for $i = 1, \ldots, n$. We see that for each fixed $1 \leq i \leq n$, $w \nabla_i w = \nabla_{l} u \nabla_{il} u$,
and by \eqref{hess-A70'} and (\ref{g1}) that
\begin{equation}
\label{g3}
\begin{aligned}
w \nabla_{ii} w
    = \,& (\nabla_{lii}u+ R^{k}_{iil} \nabla_{k} u) \nabla_{l} u
          + \Big(\delta_{kl} - \frac{\nabla_{k} u \nabla_{l} u}{w^2} \Big)
            \nabla_{ik} u \nabla_{il} u \\
 \geq \,& \nabla_{l} u \nabla_l U_{ii} - \frac{w^2}{2\phi} ( A^{ii}_{p_{k}} \nabla_{k} \phi
          + 2 \phi A^{ii}_{u}) - \nabla_l u A^{ii}_{x_l} - C w^2,
\end{aligned}
\end{equation}
in which the inequality follows from that the last term in the first equality is non-negative.
Differentiating the equation \eqref{jw-21}, by (\ref{g1}), we have
\begin{equation}
\label{g11'}
\begin{aligned}
 F^{ii} \nabla_{l} u \nabla_l U_{ii}
      = \,&  \nabla_{l} u \psi_{x_l} + \psi_u |\nabla u|^2
          + \frac{ w^2}{2\phi} \psi_{p_k} \nabla_k \phi \\
          & + \beta'_\varepsilon (u - h) (|\nabla u|^2 - \nabla u \cdot \nabla h).
\end{aligned}
\end{equation}
Take $\phi = - u + \sup_M u + 1$.
By \eqref{A2},
\begin{equation}
\label{obs-G20}
 A^{ii} = A^{ii} (x, u, \nabla u)
\leq A^{ii} (x, u, 0) + A^{ii}_{p_k} (x, u, 0) \nabla_k u,
\end{equation}
which implies by \eqref{f7} that
\begin{equation}
\label{obs-g17}
- F^{ii} \nabla_{ii} \phi \geq  - K_0 (1+ \sum F^{ii}) - F^{ii} A^{ii} \geq - C (1 + |\nabla u|) \sum F^{ii}  - K_0.
\end{equation}
Thus, by plugging \eqref{g3}, \eqref{g11'} and \eqref{obs-g17} into \eqref{g2}, and applying \eqref{A1} and \eqref{A5}, we obtain
\begin{equation}
\label{obs-g18}
\begin{aligned}
0 \geq \,& C_0 F^{ii} |\nabla_{i} u|^2 - C (|\nabla u|^{\gamma_2 - 2} + |\nabla u|^{\gamma} + 1) \\
            & - C(1 + |\nabla u| + |\nabla u|^{\gamma} + |\nabla u|^{\gamma_1 - 2})\sum F^{ii},
\end{aligned}
\end{equation}
where $C_0 = \min_{\bM} 1/4\phi^2 > 0$ depends on $|u|_{C^0(\bM)}$.
We may assume $\nabla_{1} u (x_{0}) \geq |\nabla u (x_{0})|/n > 0$.
From \eqref{g1}, \eqref{obs-G20} and \eqref{gj-G20**}, we see that
\[
U_{11} \leq  - \frac{1}{2\phi} |\nabla u|^2  + C (1 + |\nabla u| + |\nabla u|^{\gamma}) < 0
\]
if $|\nabla u|$ is sufficiently large, which yields by \eqref{3I-50} that
\[
F^{11} \geq \nu_{0} \Big(1 + \sum F^{ii}).
\]
We then see from \eqref{obs-g18} that
\[
\begin{aligned}
0  \geq \,& \frac{C_0 \nu_{0}}{n^2} \Big(1 + \sum F^{ii} \Big) |\nabla u|^2
        - C (|\nabla u|^{\gamma_2 - 2} + |\nabla u|^{\gamma} + 1)\\
            & - C(1 + |\nabla u| + |\nabla u|^{\gamma} + |\nabla u|^{\gamma_1 - 2})  \sum F^{ii}.
\end{aligned}
\]
Thus $|\nabla u (x_0)| \leq C$ and the proof of \eqref{obs-gradient} is completed.

\smallskip

Finally, by applying Theorem 4.1 in \cite{BDJ} which gives uniform bounds for $|u|_{C^0(\bar M)}$ and $|\nabla u|_{C^0(\partial M)}$,
provided (i) $A (x, z, p) \equiv A (x, p)$ and $A^{\xi \xi} (x, p)$
is concave in $p$ for each $\xi \in T_x M$ or
(ii) $\mathrm{tr} A (x, z, 0) \leq 0$ when $z$ is sufficiently large and
\begin{equation}
\label{A6}
|A^{\xi \xi} (x, z, p)| \leq \bar{\omega} (x, z) |\xi|^2 (1+ |p|^2)
\end{equation}
for any $\xi \in T_x M$ when $|p|$ is sufficiently large, where $\bar{\omega} \geq 0$ is
a continuous function.
We thus have derived \eqref{maines}. Therefore the Evans-Krylov theorem \cite{Evans82}, \cite{Krylov83} and the Schauder theory \cite{GT} ensure the smooth regularity of admissible solutions of \eqref{jw-21}, while the existence is guaranteed by the continuity method \cite{GT} and the degree theory \cite{LiYY89}; we omit the proof here as it is standard and well known. We finally obtain a $C^{1, 1} (\bM)$ viscosity solution satisfying \eqref{11} and \eqref{111}, see \cite{BDJ,XB}, by approximation.

We conclude
\begin{theorem}
Suppose that \eqref{3I-20}-\eqref{3I-40}, \eqref{3I-11s}, \eqref{A2}-\eqref{A4}, \eqref{f7}, \eqref{gj-I105}, \eqref{A1}-\eqref{gj-G20**} hold.
Then there exists a viscosity solution $u\in C^{1, 1} (\bM)$ to the obstacle problem \eqref{11} and \eqref{111} under any of the following additional conditions: (i) $A (x, z, p) \equiv A (x, p)$ and $A^{\xi \xi} (x, p)$
is concave in $p$ for each $\xi \in T_x M$; (ii) \eqref{A6} and $\mathrm{tr} A (x, z, 0) \leq 0$ when $z$ is sufficiently large.
Furthermore, $u$ belongs to $C^{3,\alpha}$ on $\{x\in M: u(x) < h(x)\}$, for any $\alpha\in (0,1)$.
\end{theorem}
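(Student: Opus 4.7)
The plan is to carry out in detail the approximation scheme already set up in Section~2: for each $\varepsilon\in(0,1)$ produce an admissible classical solution $u_\varepsilon\in\mathscr{U}$ of the penalized Dirichlet problem \eqref{jw-21}, establish the $\varepsilon$-independent bound \eqref{maines}, and then pass to the limit $\varepsilon\to 0^+$ to extract a $C^{1,1}$ viscosity solution of \eqref{11}--\eqref{111}.

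First I would collect all the ingredients of the $C^2$ bound. Theorem~\ref{thm-c2} gives global second derivative estimates depending only on $|u_\varepsilon|_{C^1(\bM)}$, and Theorem~\ref{obs-th1b} bounds $|\nabla u_\varepsilon|_{C^0(\bM)}$ by $|\nabla u_\varepsilon|_{C^0(\partial M)}$ plus data (with Lemma~\ref{jw-lem1} controlling the penalty term). It remains to supply uniform bounds on $|u_\varepsilon|_{C^0(\bM)}$ and $|\nabla u_\varepsilon|_{C^0(\partial M)}$, and these are exactly the content of Theorem~4.1 in \cite{BDJ} under alternative structural assumptions (i) or (ii) (the subsolution $\ul u$ bounds $u_\varepsilon$ from below, while (i) or (ii) produces the upper and boundary gradient bounds). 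Chaining these four estimates yields \eqref{maines} uniformly in $\varepsilon$.

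Next, for each fixed $\varepsilon$, I would produce $u_\varepsilon$ itself. Since $\beta_\varepsilon\in C^2$ with $\beta_\varepsilon'\geq 0$, the penalty preserves the proper ellipticity and monotone structure, so $\ul u$ is still a subsolution and the linearized operator at any admissible solution is invertible on suitable spaces. Combining \eqref{maines} with the Evans--Krylov theorem \cite{Evans82,Krylov83} upgrades the $C^2$ bound to a $C^{2,\alpha}(\bM)$ bound, and Schauder theory \cite{GT} bootstraps to $C^{4,\alpha}(\bM)$. Existence then follows by the continuity method deforming from $\ul u$, or, if one wants to avoid a continuous path, by a Leray--Schauder degree argument as in \cite{LiYY89}; the uniform $C^{2,\alpha}$ estimate guarantees that the degree is preserved and nonzero along the deformation. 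This gives $u_\varepsilon\in\mathscr{U}$ for every sufficiently small $\varepsilon>0$.

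Finally I would pass to the limit and address interior regularity. The uniform bound \eqref{maines} gives equicontinuity of $\nabla u_\varepsilon$, so along a subsequence $u_\varepsilon\to u$ in $C^{1,\beta}(\bM)$ for any $\beta<1$, with $u\in C^{1,1}(\bM)$ by the weak-$*$ lower semicontinuity of the Hessian norm. Since Lemma~\ref{jw-lem1} bounds $\beta_\varepsilon(u_\varepsilon-h)$ uniformly while $\beta_\varepsilon(z)\to\infty$ for every $z>0$, in the limit $u\leq h$ on $\bM$; the standard viscosity stability argument of \cite{BDJ,XB} then identifies $u$ as a viscosity solution of \eqref{11}--\eqref{111}. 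On the open set $\{u<h\}$ the penalty term vanishes for $\varepsilon$ small, so $u$ solves $f(\lambda(\nabla^2 u+A[u]))=\psi[u]$ in the viscosity sense; starting from the $C^{1,1}$ regularity, a further application of Evans--Krylov plus Schauder upgrades $u$ to $C^{3,\alpha}_{\mathrm{loc}}$ there. The only nontrivial step is the passage from \eqref{maines} to existence of $u_\varepsilon$ compatible with that estimate; the a priori bounds from Theorems~\ref{thm-c2} and \ref{obs-th1b} together with the quoted bounds from \cite{BDJ} reduce this to a standard, if somewhat technical, continuity/degree argument, which is why it can reasonably be omitted.
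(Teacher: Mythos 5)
Your proposal is correct and follows essentially the same route as the paper: assemble the uniform $C^2$ bound from Theorem~\ref{thm-c2}, Theorem~\ref{obs-th1b}, Lemma~\ref{jw-lem1}, and Theorem~4.1 of \cite{BDJ} under alternative (i) or (ii); obtain classical $u_\varepsilon$ via Evans--Krylov, Schauder, and continuity/degree; then send $\varepsilon\to 0^+$. The paper simply cites these steps as standard and omits them, so your added detail on the limit passage (the $C^{1,1}$ compactness, the forcing of $u\leq h$ from the uniform bound on $\beta_\varepsilon$, the local reduction to the unpenalized equation on $\{u<h\}$ and the bootstrap there) is a faithful unpacking of the same argument.
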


\bigskip

\small

\end{document}